\newtheorem{theorem}{Theorem}[section]
\newtheorem{lemma}[theorem]{Lemma}
\newtheorem{proposition}[theorem]{Proposition}
\newtheorem{corollary}[theorem]{Corollary}
\theoremstyle{definition}
 \newtheorem{definition}[theorem]{Definition}
\newtheorem{example}[theorem]{Example}
\theoremstyle{remark}
\numberwithin{equation}{section}
\begin{document}

\title[Translation Preserving Operators on
  LCA Groups]{Translation Preserving Operators on
  Locally Compact Abelian Groups}

\author[M. Mortazavizadeh, R. Raisi Tousi, R. A. Kamyabi Gol ]{M. Mortazavizadeh, R. Raisi Tousi$^{*}$, R. A. Kamyabi Gol \\
January 30, 2019
}

\subjclass[2010]{Primary  47A15 ; Secondary  42B99, 22B99.}

\keywords{Locally compact abelian group, multiplication preserving operator,
range function, translation preserving operator, range operator.}

\begin{abstract}
We study translation preserving operators, that is operators commuting with translations by a closed subgroup of a locally compact abelian group. We show that there is a one to one correspondence between these operators and range operators. Furthermore, we obtain a necessary condition for a translation preserving operator to be Hilbert Schmidt or of finite trace  in terms of its range operator.

\end{abstract} \maketitle

\section{Introduction and Preliminaries}

\noindent
 For a locally compact abelian (LCA) group  $G$, a translation invariant space is defined to be a closed subspace of $L^2(G)$  that is invariant under translations by elements of a closed subgroup  $\Gamma$ of $G$. Translation invariant spaces in the case  $\Gamma$ is closed, discrete and cocompact, called shift invariant spaces, have been studied in \cite{ Cab, BDR, Hel, KRr, KRs, RS}, and extended to the case of $\Gamma$ closed and cocompact (but not necessarily discrete) in \cite{BR}, see also \cite{HelS}. Recently, translation invariant spaces have been generalized in \cite{BHP} to the case when $\Gamma$  is closed (but not necessarily discrete or cocompact). In \cite{B} Bownik defined shift preserving operators as bounded linear operators on $L^2(\mathbb{R}^n)$ that commute with integer translations and characterized them via range operators. In \cite{KRsh}, the authors introduced shift preserving operators on LCA groups, whereas our goal in this paper is to define and study translation preserving operators on LCA groups. We define a translation preserving operator as a bounded linear operator on $L^2(G)$ that commutes with translations by elements of a closed subgroup $\Gamma$ of $G$ which is not necessarily discrete or cocompact. We characterize such operators in terms of range operators and define a multiplication preserving operator on a certain vector valued space. Following an idea of \cite{BHP}, we extend the results in \cite{KRsh} to the general setting when $G$ is an LCA group and $\Gamma$ is a closed subgroup of $G$. Indeed, we show that there is a one to one correspondence between translation preserving operators on $L^2(G)$ and multiplication preserving operators on the vector valued space. We use this correspondence to get the characterization of translation preserving operators in terms of range operators. We also show that a translation preserving operator has several properties in common with its associated range operator, especially compactness of one implies compactness of the other. We obtain a necessary condition for a translation preserving operator to be  Hilbert Schmidt and of finite  trace.
 %  and generally in Schatten $p$-class, where $1 \leq p < \infty$.

This paper is organized as follows. In the rest of this section, we state some required
preliminaries and notation related to multiplicatively invariant and translation invariant spaces which are studied by Bownik and Ross in \cite{BR}.
  In Section 2, we define translation preserving operators on $L^2(G)$, and give a characterization of them in terms of range operators. We achieve our goal by transforming $L^2(G)$ into a vector valued space, in such a way that translation preserving operators transfer to multiplication preserving operators. Indeed, to characterize a given translation preserving operator $U$ on $L^2(G)$, we transfer $U$  to a multiplication preserving operator $U^{'}$ on a vector valued space, and then we give a characterization of $U^{'}$ in terms of range operators. In Section 3, we find relations between some properties of translation preserving operators and the corresponding range operators. For a translation preserving operator $U$, we show that if $U$ is Hilbert Schmidt (of finite trace), then so is the range operator associated to $U$. 

Let $(\Omega, m)$ be a $\sigma$- finite measure space and $\mathcal{H}$ be a separable Hilbert space. A range function is a mapping $J: \Omega \longrightarrow \lbrace \textrm{  closed subspaces  of   $\mathcal{H}$ } \rbrace$. We write $P_{J}(\omega)$ for the orthogonal projections of $\mathcal{H}$ onto $J(\omega)$. A range function $J$ is measurable if the mapping $\omega \mapsto \langle P_{J}(\omega)(a) , b \rangle$ is measurable for all $a ,b \in \mathcal{H} $. Consider the space $L^{2}(\Omega, \mathcal{H})$ of all measurable functions $\phi$ from $\Omega$ to $\mathcal{H}$ such that $\Vert \phi \Vert _{2}^{2} = \int_{\Omega} \Vert \phi(\omega)\Vert^{2}_{\mathcal {H}} dm(\omega) <\infty$ with the inner product $\langle \phi , \psi \rangle = \int_{\Omega} \langle \phi(\omega) , \psi(\omega) \rangle_{\mathcal{H}} d m(\omega)$. It can be shown that $L^{2}(\Omega, \mathcal{H})$ is isometrically isomorphic to  $L^{2}(\Omega) \otimes \mathcal{H}$, where $\otimes$ denotes the tensor product of Hilbert spaces. A subset $\mathcal{D}$ of $L^{\infty}(\Omega) $ is said to be a determinig set for $L^{1}(\Omega) $, if for any $ f\in L^{1}(\Omega)$, $\int_{\Omega} fg dm =0 $ for all $ g \in \mathcal{D}$ implies that $f=0$. A closed subspace $W$ of $L^{2}(\Omega, \mathcal{H})$ is called multiplicatively invariant with respect to a determining set $\mathcal{D}$, if for each $\phi \in W$ and $g \in \mathcal{D}$, one has $g\phi \in W$. Bownik and Ross in \cite[Theorem 2.4]{BR}, showed that there is a correspondence between  multiplicatively invariant spaces and measurable range functions as follows.
\begin{proposition} \label{p1.1}
Suppose that $L^{2}(\Omega)$ is separable, so that $L^{2}(\Omega, \mathcal{H})$ is also separable. Then for a closed subspace $W$ of $L^{2}(\Omega, \mathcal{H})$ and a determining set $\mathcal{D}$  for $L^{1}(\Omega) $ the following are equivalent.\\
(1)  $W$ is multiplicatively invariant with respect to $\mathcal{D}$.\\
(2)  $W$ is multiplicatively invariant with respect to $L^{\infty}(\Omega)$.\\
(3) There exists a measurable range function $J$ such that 
\begin{equation*}
W=\lbrace \phi \in L^{2}(\Omega , \mathcal{H}) : \phi(\omega) \in J(\omega) \ \text{,} \ \  \text{ a.e. }  \omega \in \Omega \rbrace .
\end{equation*}
Identifying range functions which are equivalent almost everywhere, the correspondence between $\mathcal{D}$- multiplicatively invariant spaces and measurable range functions is one to one and onto. Moreover, there is a countable subset $\mathcal{A}$ of  $L^{2}(\Omega, \mathcal{H})$ such that $W$ is the smallest closed $\mathcal{D}$- multiplicatively invariant space containing $\mathcal{A}$. For any such $\mathcal{A}$ the measurable range function associated to $W$ satisfies
\begin{equation*}
J(\omega) = \overline{span} \lbrace \phi (\omega) : \phi \in \mathcal{A}\rbrace \ \  a.e. \  \omega \in \Omega .  \label{J} 
\end{equation*}
\end{proposition}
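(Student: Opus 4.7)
\medskip

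\noindent\textbf{Proof proposal.} The plan is to prove the cycle of implications $(3) \Rightarrow (2) \Rightarrow (1) \Rightarrow (3)$ and then address uniqueness. The implication $(3) \Rightarrow (2)$ is immediate: if $\phi(\omega) \in J(\omega)$ a.e.\ and $g \in L^{\infty}(\Omega)$, then $g(\omega)\phi(\omega) \in J(\omega)$ a.e.\ because $J(\omega)$ is a subspace. The implication $(2) \Rightarrow (1)$ is trivial since $\mathcal{D} \subset L^{\infty}(\Omega)$.

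For $(1) \Rightarrow (2)$ I would exploit the defining property of a determining set by pairing $W$ against $W^{\perp}$. Given $\phi \in W$ and $\psi \in W^{\perp}$, the function $f_{\phi,\psi}(\omega) := \langle \phi(\omega), \psi(\omega)\rangle_{\mathcal{H}}$ lies in $L^{1}(\Omega)$ by Cauchy--Schwarz, and for every $h \in \mathcal{D}$ the assumption $h\phi \in W$ yields
\[
\int_{\Omega} h(\omega)\, f_{\phi,\psi}(\omega)\, dm(\omega) \;=\; \langle h\phi, \psi\rangle \;=\; 0.
\]
Because $\mathcal{D}$ is determining, $f_{\phi,\psi} = 0$ almost everywhere, and therefore $\langle g\phi,\psi\rangle = 0$ for every $g \in L^{\infty}(\Omega)$ and every $\psi \in W^{\perp}$, giving $g\phi \in W$.

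The principal work lies in $(2) \Rightarrow (3)$. Using separability of $L^{2}(\Omega,\mathcal{H})$, I take a countable dense set $\mathcal{A} = \{\phi_{n}\}_{n \in \mathbb{N}}$ of $W$ and set $J(\omega) := \overline{\operatorname{span}}\{\phi_{n}(\omega) : n \in \mathbb{N}\}$. Measurability of $J$ in the sense required is obtained by a pointwise measurable Gram--Schmidt procedure on $\{\phi_{n}(\omega)\}$, which yields a measurable field of orthonormal bases and thus the measurability of $\omega \mapsto \langle P_{J}(\omega)a, b\rangle$ for all $a,b \in \mathcal{H}$. Put $\widetilde{W} := \{\phi \in L^{2}(\Omega,\mathcal{H}) : \phi(\omega) \in J(\omega) \text{ a.e.}\}$. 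Each $\phi_{n} \in \widetilde{W}$ and $\widetilde{W}$ is closed, so $W \subseteq \widetilde{W}$. For the reverse inclusion, observe that $(2)$ forces $W^{\perp}$ to be $L^{\infty}$-multiplicatively invariant as well, since for $\chi \in W^{\perp}$, $\phi \in W$, and $g \in L^{\infty}(\Omega)$ one has $\langle g\chi, \phi\rangle = \langle \chi, \bar{g}\phi\rangle = 0$. Rerunning the pairing argument of the previous paragraph with the roles reversed gives $\langle \phi_{n}(\omega), \chi(\omega)\rangle = 0$ a.e.; discarding a countable union of null sets we conclude $\chi(\omega) \perp J(\omega)$ a.e. Now given $\phi \in \widetilde{W}$, decompose $\phi = \phi_{1} + \phi_{2}$ with $\phi_{1} \in W$ and $\phi_{2} \in W^{\perp}$; then $\phi_{2}(\omega) = \phi(\omega) - \phi_{1}(\omega) \in J(\omega)$ a.e., while at the same time $\phi_{2}(\omega) \perp J(\omega)$ a.e., forcing $\phi_{2} = 0$ and hence $\phi \in W$.

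Uniqueness then follows because any measurable range function $J'$ producing $W$ must satisfy $\phi_{n}(\omega) \in J'(\omega)$ a.e.\ for each $n$ (giving $J \subseteq J'$ a.e.), while any vector orthogonal to $J(\omega)$ on a positive-measure set can be used to build an element of $W^{\perp}$ not annihilating the corresponding element of $W$ described by $J'$, contradicting the equality. The step I expect to be the main obstacle is the measurability of $J$ together with the careful bookkeeping of the countably many exceptional null sets in the $W^{\perp}$ argument, which in the generality of a $\sigma$-finite measure space requires measurable-selection style arguments rather than an ad hoc construction.
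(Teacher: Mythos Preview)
The paper does not actually prove Proposition~1.1: it is quoted as a preliminary result from Bownik and Ross \cite[Theorem~2.4]{BR}, so there is no in-paper proof to compare against. Your argument is essentially the standard one used in that reference, and the main steps are correct: the determining-set trick for $(1)\Rightarrow(2)$ via $f_{\phi,\psi}\in L^{1}(\Omega)$ is exactly the right idea, and your $(2)\Rightarrow(3)$ argument---defining $J(\omega)$ from a countable dense family and showing $\widetilde W = W$ by proving $\chi(\omega)\perp J(\omega)$ a.e.\ for $\chi\in W^{\perp}$---is the usual route.

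Two places deserve tightening. First, the proposition asserts the formula $J(\omega)=\overline{\operatorname{span}}\{\phi(\omega):\phi\in\mathcal A\}$ for \emph{any} countable $\mathcal A$ whose smallest closed $\mathcal D$-invariant hull is $W$, not only for the countable dense set you chose; this needs a separate short argument (show both such $J$'s yield $W$ and invoke uniqueness). Second, your uniqueness paragraph is vague: the clean way is to note that the orthogonal projection $P$ of $L^{2}(\Omega,\mathcal H)$ onto $W$ acts fibrewise as $(P\phi)(\omega)=P_{J}(\omega)\phi(\omega)$, and since $P$ is unique the field $\omega\mapsto P_{J}(\omega)$ is determined a.e.\ (test against functions $\chi_{E}\otimes a$ for measurable $E$ and $a\in\mathcal H$). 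Your measurable Gram--Schmidt remark is correct in spirit; in Bownik--Ross this is handled by standard measurable-field arguments from Helson, so you are right to flag it as the technical crux.
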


Through this paper, we assume that $G$ is a second countable LCA group and $\Gamma$ is a closed subgroup of $G$. Assume that  $\Gamma^{*}$ is the annihilator of $\Gamma$ in $\widehat{G}$. Also suppose that $\Omega$ is a measurable section for the quotient $\widehat{G} / \Gamma ^{*}$ and $C$ is a measurable section for the quotient $G / \Gamma$. For $\gamma \in \Gamma$ we denote by $X_{\gamma}$ the associated character on $\widehat{G}$, i.e. $X_{\gamma}(\chi )= \chi(\gamma)$ for  $\chi \in \widehat{G}$. One can see that the set $ \mathcal{D}= \lbrace X_{\gamma} \vert_{\Omega} : \gamma \in \Gamma \rbrace $ is a determining set for $L^{1}(\Omega) $. A closed subspace $ V \subseteq L^{2}(G)$ is called $\Gamma$- translation invariant space, if $T_{\gamma} V \subseteq V $ for all $\gamma \in \Gamma$. We say that $V$ is generated by a countable subset $\mathcal{A}$ of  $L^{2}(G)$, when $V=S^{\Gamma}(\mathcal{A})=\overline{span}\lbrace T_{\gamma}f : f \in \mathcal{A} , \gamma \in \Gamma \rbrace$.
In \cite[Proposition 6.4]{BHP} it is shown  that there exists an isometric isomorphism between $L^{2} (G)$ and $ L^{2}(\Omega , L^{2}(C))$, namely $Z: L^{2} (G) \longrightarrow L^{2}(\Omega , L^{2}(C))$  satisfying 
 \begin{equation} \label{Zak}
 Z(T_{\gamma}\phi )= X_{\gamma} \vert_{\Omega} Z (\phi).
 \end{equation}
 Let $Z$ be as in \eqref{Zak}. The forthcoming proposition, which is \cite[Theorem 6.5]{BHP}, states that $Z$ turns $\Gamma$- translation invariant  spaces in $L^{2} (G)$ into multiplicatively invariant spaces in $L^{2}(\Omega , L^{2}(C))$ with respect to the determining set  $ \mathcal{D}= \lbrace X_{\gamma} \vert_{\Omega} : \gamma \in \Gamma \rbrace$ and vice versa. It also establishes a characterization of $\Gamma$- translation invariant  spaces in terms of range functions.
\begin{proposition} \label{p1.2}
Let $V \subseteq L^{2}(G)$ be a closed subspace and $Z$ be as above. Then the following are equivalent.\\
(1)  $V$ is a $\Gamma$- translation invariant  space. \\
(2)  $Z(V)$ is a multiplicavely invariant subspace of $ L^{2}(\Omega , L^{2}(C))$ with respect to the determining set $ \mathcal{D}= \lbrace X_{\gamma} \vert_{\Omega} : \gamma \in \Gamma \rbrace $. \\
(3) There exists a measurable range function $J: \Omega \longrightarrow \lbrace  closed \ subspaces \  of \   L^{2}(C) \rbrace $ such that 
\begin{equation*}
V=\lbrace f \in L^{2}(G) : Z(f)(\omega)\in J(\omega) \text{,} \ \  \text{for a.e. }  \omega \in \Omega \rbrace.
\end{equation*}
Identifying range functions which are equivalent almost everywhere, the correspondence between $\Gamma$- translation invariant spaces and measurable range functions is one to one and onto. Moreover if $V=S^{\Gamma}(\mathcal{A})$ for some countable subset $\mathcal{A}$ of $L^{2}(G)$, the measurable range function $J$  associated to $V$ is given by
\begin{equation*}
J(\omega) = \overline{span} \lbrace Z(\phi) (\omega) : \phi \in \mathcal{A}\rbrace \ \  a.e.  \  \omega \in \Omega .  
\end{equation*}
\end{proposition}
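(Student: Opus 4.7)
The plan is to transfer everything through the isometric isomorphism $Z$ of \eqref{Zak} and then invoke Proposition \ref{p1.1}. The intertwining relation $Z(T_{\gamma}\phi) = X_{\gamma}|_{\Omega} Z(\phi)$ is the crux: it exchanges the translation action of $\Gamma$ on $L^{2}(G)$ with the multiplication action of $\mathcal{D} = \{X_{\gamma}|_{\Omega} : \gamma \in \Gamma\}$ on $L^{2}(\Omega, L^{2}(C))$. Because $Z$ is an isometric bijection, closed subspaces of $L^{2}(G)$ correspond bijectively to closed subspaces of $L^{2}(\Omega, L^{2}(C))$, so the argument is largely bookkeeping combined with one appeal to the existing multiplicatively invariant characterization.

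For the equivalence of (1) and (2), I use \eqref{Zak} directly. If $V$ is $\Gamma$-translation invariant and $f \in V$, $\gamma \in \Gamma$, then $T_{\gamma}f \in V$, whence $X_{\gamma}|_{\Omega} Z(f) = Z(T_{\gamma}f) \in Z(V)$; since $Z(V)$ is closed as the isometric image of a closed subspace, it is $\mathcal{D}$-multiplicatively invariant. Conversely, if $Z(V)$ is $\mathcal{D}$-multiplicatively invariant, then $Z(T_{\gamma}f) = X_{\gamma}|_{\Omega} Z(f) \in Z(V)$ for every $f \in V$, and the injectivity of $Z$ forces $T_{\gamma}f \in V$.

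For the equivalence of (2) and (3), I apply Proposition \ref{p1.1} to $W = Z(V)$ with $\mathcal{H} = L^{2}(C)$. Second countability of $G$ yields second countability of $\widehat{G}$ and hence of $\Omega$, so $L^{2}(\Omega)$ is separable as required. The proposition then produces a measurable range function $J$ with
\begin{equation*}
Z(V) = \{\phi \in L^{2}(\Omega, L^{2}(C)) : \phi(\omega) \in J(\omega) \text{ a.e.}\},
\end{equation*}
and pulling this back through $Z$ gives the claimed description of $V$. The one-to-one correspondence follows by composing $V \mapsto Z(V)$ with the bijection supplied by Proposition \ref{p1.1}. When $V = S^{\Gamma}(\mathcal{A})$, I observe that
\begin{equation*}
Z(V) = \overline{span}\{X_{\gamma}|_{\Omega} Z(f) : f \in \mathcal{A},\ \gamma \in \Gamma\},
\end{equation*}
and the generating formula in Proposition \ref{p1.1}, applied to the countable seed $\{Z(f) : f \in \mathcal{A}\}$, specializes to $J(\omega) = \overline{span}\{Z(f)(\omega) : f \in \mathcal{A}\}$.

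The argument is essentially mechanical; the one point demanding care is identifying $Z(V)$ in the generated case with the smallest closed $\mathcal{D}$-multiplicatively invariant subspace containing $\{Z(f) : f \in \mathcal{A}\}$. This requires checking both that the displayed span above is stable under multiplication by the $X_{\gamma}|_{\Omega}$ and under taking closures, and that any closed $\mathcal{D}$-multiplicatively invariant space containing the seed must contain every $X_{\gamma}|_{\Omega} Z(f)$ and thus their closed linear span. Once this minimality is in hand, the range function formula is an immediate consequence of its counterpart in Proposition \ref{p1.1}.
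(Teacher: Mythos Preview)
Your argument is correct and is precisely the natural derivation of the result from Proposition~\ref{p1.1} via the intertwining relation~\eqref{Zak}. Note, however, that the paper does not supply its own proof of this proposition: it is quoted as \cite[Theorem~6.5]{BHP}, so there is no in-paper argument to compare against. Your route---transport through $Z$, then invoke the multiplicatively invariant characterization---is exactly how the cited reference proceeds as well.
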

Similar to  \cite[Theorem 5.3]{BR}, one can see the following proposition.
\begin{proposition} \label{t2.3}
Let $V$ be a $\Gamma$- translation invariant subspace of $L^{2}(G)$. Then there exist functions $\phi_{n} \in V$, $n \in \mathbb{N}$ such that,\\
(1) The set $\{ T_{\gamma} \phi _{n} \ : \ \gamma \in \Gamma \ \}$ is a continuous Parseval frame for $S^{\Gamma}(\phi _{n})$. \\
(2) The space $V$ can be decomposed as an orthogonal sum 
\begin{equation*}
V= \bigoplus _{n \in \mathbb{N}} S^{\Gamma}(\phi _{n}).
\end{equation*}
\end{proposition}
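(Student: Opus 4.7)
The plan is to transport the statement through the isometry $Z$ of \eqref{Zak} to the multiplicatively invariant side, where the analogous decomposition is essentially \cite[Theorem 5.3]{BR}, and then pull everything back. First I would apply Proposition \ref{p1.2} to obtain $W := Z(V)$, a $\mathcal{D}$-multiplicatively invariant subspace of $L^2(\Omega, L^2(C))$ with an associated measurable range function $J$. Imitating \cite[Theorem 5.3]{BR}, I would use measurability of $J$ together with separability of $L^2(C)$ to produce measurable sections $\psi_n \colon \Omega \to L^2(C)$ such that $\|\psi_n(\omega)\| \in \{0,1\}$ almost everywhere and, at each $\omega$, the nonzero $\psi_n(\omega)$ form an orthonormal basis of $J(\omega)$. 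A $\sigma$-finite partition of $\Omega$ (then a countable relabelling) ensures each $\psi_n \in L^2(\Omega, L^2(C))$.

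Let $M_n$ denote the smallest $\mathcal{D}$-multiplicatively invariant closed subspace containing $\psi_n$. By Proposition \ref{p1.1}, the range function of $M_n$ is $\omega \mapsto \overline{\mathrm{span}}\{\psi_n(\omega)\}$, and since $J(\omega) = \bigoplus_n \overline{\mathrm{span}}\{\psi_n(\omega)\}$ orthogonally for a.e.\ $\omega$, this fibrewise decomposition lifts (again via Proposition \ref{p1.1}) to the orthogonal decomposition $W = \bigoplus_{n} M_n$. Setting $\phi_n := Z^{-1}(\psi_n) \in V$, the identity $Z(T_\gamma \phi_n) = X_\gamma|_\Omega \, \psi_n$ from \eqref{Zak} gives $Z(S^\Gamma(\phi_n)) = M_n$, so unitarity of $Z$ yields $V = \bigoplus_n S^\Gamma(\phi_n)$, which is (2).

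For (1), I would compute, for $f \in L^2(G)$ and $\gamma \in \Gamma$,
\begin{equation*}
\langle f, T_\gamma \phi_n \rangle \;=\; \langle Z(f), X_\gamma|_\Omega \psi_n \rangle \;=\; \int_\Omega \langle Z(f)(\omega), \psi_n(\omega) \rangle_{L^2(C)} \, \overline{\omega(\gamma)} \, dm(\omega),
\end{equation*}
which is the Fourier transform at $\gamma$ of the scalar function $\omega \mapsto \langle Z(f)(\omega), \psi_n(\omega)\rangle$ on $\Omega \cong \widehat{\Gamma} = \widehat{G}/\Gamma^{*}$. Plancherel on $\Gamma$ then gives
\begin{equation*}
\int_{\Gamma} |\langle f, T_\gamma \phi_n\rangle|^2 \, d\gamma \;=\; \int_{\Omega} |\langle Z(f)(\omega), \psi_n(\omega)\rangle|^2 \, dm(\omega).
\end{equation*}
For $f \in S^{\Gamma}(\phi_n)$ we have $Z(f)(\omega) \in \overline{\mathrm{span}}\{\psi_n(\omega)\}$ a.e., and because $\|\psi_n(\omega)\| \in \{0,1\}$ the right-hand integrand equals $\|Z(f)(\omega)\|^2$ a.e., hence the total equals $\|f\|^2$. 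This is exactly the continuous Parseval frame identity.

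The main obstacle is the measurable-selection step that produces the sections $\psi_n$: a pointwise Gram--Schmidt procedure must be carried out in a Borel-measurable way in $\omega$ and then truncated along a $\sigma$-finite exhaustion of $\Omega$ so that each $\psi_n$ lies in $L^2(\Omega, L^2(C))$. This is precisely the delicate ingredient already handled in \cite[Theorem 5.3]{BR}, which is why the authors describe the proof only as \emph{similar}; everything else in the argument is routine transport through $Z$ plus one application of Plancherel on $\Gamma$.
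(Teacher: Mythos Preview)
Your proposal is correct and is precisely the argument the paper has in mind: the authors give no proof beyond the remark ``Similar to \cite[Theorem 5.3]{BR}'', and your plan---transport $V$ to the multiplicatively invariant space $W=Z(V)$, invoke the Bownik--Ross measurable orthonormal-section decomposition (with the $\sigma$-finite truncation to force each $\psi_n\in L^2(\Omega,L^2(C))$), and pull the orthogonal splitting and the fibrewise Parseval identity back through the isometry $Z$ via Plancherel on $\Gamma$---is exactly that. The computation you give for (1) is the standard one and is correct as written.
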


\section{A Characterization of Translation Preserving Operators}
Let the notation be as in  Section 1. A bounded linear operator $U$ on $L^2(G)$ is said to be translation preserving associated to $\Gamma$, if 
$UT_{\gamma} = T_{\gamma}U$
 where $T_{\gamma}$ is the translation operator. The main result in this section is a characterization of $\Gamma$- translation preserving operators in terms of range operators. In fact let $V$ be a $\Gamma$- translation invariant space with a range function $J$. A range operator on $J$ is a mapping $R$ from the Borel section $\Omega$ of $\widehat{G}/ \Gamma ^* $ to the set of all bounded linear operators on closed subspaces of $L^{2}(C)$, where $C$ is a Borel section of $G/ \Gamma$, so that the domain of $R(\omega)$ equals $J(\omega)$ for a.e. $\omega \in \Omega$. A range operator $R$ is called measurable, if the mapping $\omega \mapsto \langle R(\omega)P_{J}(\omega)(a) , b \rangle$ is measurable for all $a,b \in L^2(C)$, where $P_{J}(\omega)$ is the orthogonal projection of $L^{2}(C)$ onto $J(\omega)$.
\begin{definition}
Let $\mathcal{D}$ be a determining set for $L^{1}(\Omega) $  and $W \subseteq L^2(\Omega ,  \mathcal{H})$ be a $\mathcal{D}$- multiplicatively invariant space. A bounded linear operator $U: W \longrightarrow  L^2(\Omega ,  \mathcal{H})$ is said to be  multiplication preserving with respect to $\mathcal{D}$, if for all $g \in \mathcal{D}$ and $\phi \in W$ 
\begin{equation*}
U(g \phi)(\omega) = g(\omega) U(\phi)(\omega).
\end{equation*}  
\end{definition}

Now we establish a characterization of multiplication preserving  operators in terms of range operators in the following theorem. To prove this characterization theorem, we require some preliminaries related to von Neumann algebras. Let $M$ be a subset of $\mathcal{B}(\mathcal{H})$, the space of all bounded linear operators on $\mathcal{H}$. The commutant of $M$ is defined as
\begin{equation*}
M^{'}= \lbrace T \in \mathcal{B}(\mathcal{H}) : TS=ST \ \ \forall S \in M\rbrace.
\end{equation*}
The bicommutant of $M$ is defind as $M^{''} = (M^{'})^{'}$. A $*$- subalgebra $\mathcal{R}$ of $\mathcal{B}(\mathcal{H})$ is said to be a von Neumann algebra, if $\mathcal{R} = \mathcal{R}^{''}$. For comprehensive details on von Neumann algebra theory we refer to \cite{D}. Let $L^1 (\mathcal{H})$ be the space of all trace class operators on $\mathcal{H}$. By \cite[Theorem 4.2.3 ]{M}, the predual of the von Neumann algebra $\mathcal{B}(\mathcal{H})$ is isometrically isomorphic to $L^1 (\mathcal{H})$. 
%
%Especially, if $T$ is a nonzero element of $\mathcal{B}(\mathcal{H})$, then there exist $\xi , \eta \in \mathcal{H}$ and an operator $\omega_{\xi, \eta} \in L^1(\mathcal{H})$ given by $ \omega_{\xi, \eta}(S)= \langle S\xi, \eta\rangle$ such that $\omega_{\xi, \eta}$ is weak operator continuous and $\omega_{\xi, \eta}(T) \neq 0$.
\begin{theorem} \label{t3.3}
Suppose that $W \subseteq L^2(\Omega ,  \mathcal{H})$ is a multiplicatively invariant space with respect to a determining set $\mathcal{D}$ and $J$ is the corresponding range function. Assume that $U: W \longrightarrow  L^2(\Omega ,  \mathcal{H})$ is a bounded linear operator. Then the following are equivalent.\\
(1)  $U$ is  multiplication preserving  with respect to $\mathcal{D}$.\\
(2)  $U$ is  multiplication preserving  with respect to $L^{\infty}(\Omega) $.\\
(3) There exists a measurable range operator $R$ on $J$ such that for all $\phi \in W$,
\begin{equation} \label{r.op}
U\phi(\omega) = R(\omega)(\phi(\omega)) \ \ a.e. \ \omega \in \Omega .
\end{equation} 
Conversely, given a measurable range operator $R$ on $J$ with $ess \sup_{\omega \in \Omega} \Vert R(\omega) \Vert < \infty$,  there is a bounded multiplication preserving operator $ U : W \longrightarrow L^2(\Omega ,  \mathcal{H})$, such that \eqref{r.op} holds. The correspondence between $U$ and $R$ is one to one under the usual convention that the range operators are identified if they are equal a.e. Moreover $\Vert U \Vert = ess  \sup_{\omega \in \Omega} \Vert R(\omega) \Vert $.
\end{theorem}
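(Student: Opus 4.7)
The overall plan is to prove (1)$\Leftrightarrow$(2) by a weak-$*$ density argument, then extract the range operator $R$ pointwise from a countable generating family for $W$ to obtain (2)$\Rightarrow$(3), and finally handle the converse implication and the norm identity by a direct calculation. Proposition~\ref{p1.1} will be used in two ways: it upgrades $\mathcal{D}$-multiplicative invariance of $W$ to $L^{\infty}(\Omega)$-invariance, and it supplies the countable generators whose pointwise values span $J(\omega)$.

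For (1)$\Rightarrow$(2), the converse being immediate, let
\[
\mathcal{S} = \{\, g\in L^{\infty}(\Omega) : U(g\phi)=g\,U\phi \text{ for all } \phi\in W\,\}.
\]
By Proposition~\ref{p1.1} the space $W$ is $L^{\infty}(\Omega)$-multiplicatively invariant, so $\mathcal{S}$ is well defined, and it is visibly a linear subspace containing $\mathcal{D}$. The key claim is that $\mathcal{S}$ is weak-$*$ closed in $L^{\infty}(\Omega)$: if $g_n\to g$ weak-$*$, then for each $\phi,\psi\in L^{2}(\Omega,\mathcal{H})$ Cauchy--Schwarz puts $\omega\mapsto\langle\phi(\omega),\psi(\omega)\rangle_{\mathcal{H}}$ in $L^{1}(\Omega)$, so $g_n\phi\to g\phi$ weakly in $L^{2}(\Omega,\mathcal{H})$; boundedness of $U$ then gives $U(g_n\phi)\to U(g\phi)$ weakly, while trivially $g_n U\phi\to g U\phi$ weakly. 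Finally, the defining property of a determining set says the preannihilator of $\mathcal{D}$ in $L^{1}(\Omega)$ is $\{0\}$, so by Hahn--Banach the weak-$*$ closed linear span of $\mathcal{D}$ is all of $L^{\infty}(\Omega)$, and hence $\mathcal{S}=L^{\infty}(\Omega)$.

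For (2)$\Rightarrow$(3), pick countable generators $\{\phi_k\}_{k\in\mathbb{N}}\subseteq W$ from Proposition~\ref{p1.1} so that $J(\omega)=\overline{\mathrm{span}}\{\phi_k(\omega)\}$ a.e. Applying (2) with $g=\chi_A$ for a measurable $A\subseteq\Omega$ yields $U(\chi_A\phi)=\chi_A U\phi$, which produces two ingredients: first, whenever $\phi$ vanishes a.e.\ on $A$ so does $U\phi$; second, the inequality
\[
\int_A\|(U\phi)(\omega)\|_{\mathcal{H}}^{2}\,dm(\omega)\le\|U\|^{2}\!\int_A\|\phi(\omega)\|_{\mathcal{H}}^{2}\,dm(\omega)
\]
holds for every $A$, which on varying $A$ sharpens to the pointwise bound $\|(U\phi)(\omega)\|\le\|U\|\,\|\phi(\omega)\|$ a.e. The first ingredient makes the formula $R(\omega)\bigl(\sum_{k}c_{k}\phi_{k}(\omega)\bigr):=\sum_{k}c_{k}(U\phi_{k})(\omega)$ well defined on each finite $(\mathbb{Q}+i\mathbb{Q})$-linear combination of the $\phi_{k}(\omega)$; since only countably many such combinations arise, their exceptional null sets merge into a single null set, and the second ingredient extends $R(\omega)$ by uniform continuity and density to a bounded $\mathbb{C}$-linear operator on $J(\omega)$ with $\|R(\omega)\|\le\|U\|$. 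Measurability of $R$ follows from that of the coordinate functions $\omega\mapsto\langle(U\phi_{k})(\omega),b\rangle$ together with the measurability of $P_{J}(\omega)$ from Proposition~\ref{p1.1}.

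The converse is routine: given a measurable $R$ on $J$ with $M:=\mathrm{ess}\sup_{\omega}\|R(\omega)\|<\infty$, the formula $U\phi(\omega):=R(\omega)\phi(\omega)$ defines a bounded operator on $W$ with $\|U\|\le M$ that is plainly multiplication preserving with respect to $L^{\infty}(\Omega)$, hence with respect to $\mathcal{D}$; combining with the reverse inequality $\|R(\omega)\|\le\|U\|$ a.e.\ obtained above gives the norm identity $\|U\|=M$, and the one-to-one correspondence is immediate from \eqref{r.op}. I expect the main obstacle to be the pointwise construction of $R$ in (2)$\Rightarrow$(3): one must organize the a.e.\ exceptional sets so that a single null set serves simultaneously for all countably many rational combinations of the $\phi_{k}$, and the measurability of the final $\mathbb{C}$-linear extension must be reconciled with the measurability criterion formulated through $P_{J}(\omega)$.
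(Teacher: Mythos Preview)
Your proof is correct and follows the same overall architecture as the paper's: (3)$\Rightarrow$(2)$\Rightarrow$(1) are trivial, (1)$\Rightarrow$(2) goes through weak-$*$ density of the span of $\mathcal{D}$ in $L^{\infty}(\Omega)$, and (2)$\Rightarrow$(3) proceeds by establishing the pointwise bound $\|U\phi(\omega)\|\le\|U\|\,\|\phi(\omega)\|$ a.e.\ from testing against all $g\in L^{\infty}(\Omega)$ (equivalently, all characteristic functions), then extending from a countable generating family to $J(\omega)$. The norm identity and the converse are handled identically.

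The one substantive difference lies in (1)$\Rightarrow$(2). The paper casts this step in von Neumann algebra language: it embeds $L^{\infty}(\Omega)$ into $\mathcal{B}(L^{2}(\Omega,\mathcal{H}))$ via $g\mapsto M_{g}$, verifies that this embedding takes weak-$*$ convergence to weak-operator convergence, and then, assuming some $x\in\mathcal{D}'\setminus L^{\infty}(\Omega)'$ existed, uses the identification of the predual of $\mathcal{B}(L^{2}(\Omega,\mathcal{H}))$ with the trace class to manufacture a nonzero weak-$*$ continuous functional on $L^{\infty}(\Omega)$ annihilating $\mathcal{D}$, contradicting the determining property. Your argument reaches the same conclusion by the more direct route of showing that the set $\mathcal{S}=\{g:U(g\phi)=gU\phi\ \text{for all}\ \phi\in W\}$ is a weak-$*$ closed linear subspace of $L^{\infty}(\Omega)$ containing $\mathcal{D}$, and then invoking Hahn--Banach. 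Your version avoids the operator-algebra machinery and is arguably cleaner; the paper's version has the merit of isolating the general fact that $\mathcal{D}$ and $L^{\infty}(\Omega)$ have the same commutant in $\mathcal{B}(L^{2}(\Omega,\mathcal{H}))$, which is of independent interest. In (2)$\Rightarrow$(3) you are also more explicit than the paper about well-definedness of $R(\omega)$ and the merging of null sets over countably many rational combinations; the paper simply writes $S(\omega)(\phi(\omega))=U\phi(\omega)$ on the generators and appeals to the same pointwise norm bound.
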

\begin{proof}
First assume that (3) holds. For $\phi \in W$ and $ g \in L^{\infty}(\Omega)$ we have 
\begin{align*}
   U(g \phi )(\omega) &=  R(\omega)(g(\omega) \phi(\omega)) \cr
   &= g(\omega) R(\omega)( \phi(\omega)) \cr
   &= g(\omega) U \phi (\omega).
 \end {align*}
 Thus (2) holds. Also (2) implies (1) trivially. 
 
 Suppose that (1) holds. For $g \in L^{\infty}(\Omega)$ let $M_g$ be the multiplication operator $ M_g : L^2(\Omega, \mathcal{H}) \longrightarrow L^2(\Omega, \mathcal{H})$, $ M_{g} \phi (\omega) = g(\omega) \phi(\omega)$. Using the embedding $g\mapsto M_g$, we can consider $\mathcal{D}$ and $ L^{\infty}(\Omega)$ as subsets of $\mathcal{B}(L^2(\Omega, \mathcal{H}))$, the set of all bounded linear operators on $L^2(\Omega, \mathcal{H})$. In the context of von Neumann algebras since (1) holds, we have $\mathcal{D} \subseteq \mathcal{R}(U)^{'}$, where $\mathcal{R}(U)$ is the von Neumann algebra generated by $U$. We show that  $L^{\infty}(\Omega) \subseteq \mathcal{R}(U)^{'}$. It is enough to show that $\mathcal{D}^{'} \subseteq L^{\infty}(\Omega) ^{'}$. Suppose by contradiction $ x \in \mathcal{D}^{'} \setminus L^{\infty}(\Omega) ^{'} $. Let $y_0 \in L^{\infty}(\Omega) $ be such that $xy_0 - y_{0}x \neq 0$. Now by \cite[Theorem 4.2.3]{M}, there exists an operator  $\omega_{\xi , \eta} \in L^1 (L^2(\Omega, \mathcal{H}))$ which is continuous in the weak operator topology  such that 
 \begin{equation} \label{T}
\omega_{\xi , \eta}(xy_0 - y_{0}x) \neq 0,
\end{equation}
where $\xi , \eta \in \mathcal{H}$ and  for $T \in \mathcal{B}(L^2(\Omega, \mathcal{H}))$, 
 \begin{equation*} 
\omega_{\xi , \eta}(T) = \langle T \xi , \eta \rangle.
\end{equation*}

 We show that if $\lbrace f_{\alpha} \rbrace$ is a net in $L^{\infty}(\Omega)$ converges to $f$ in the weak-* topology, then $M_{f_{\alpha}} \rightarrow M_{f}$ in the weak operator topology. Using the fact that $L^2(\Omega, \mathcal{H}) \cong L^2 (\Omega) \otimes \mathcal{H}$,  we have for $\varphi_0 , \varphi _1 \in L^2 (\Omega) $ and $\xi _ 0 , \xi _1 \in \mathcal{H}$ 
\begin{align*}
\langle M_{f_{\alpha}} (\varphi_0 \otimes \xi_0 ) , \varphi_1 \otimes \xi_1 \rangle &= \langle f_{\alpha}(\varphi_0 \otimes \xi_0 ), \varphi_1 \otimes \xi_1 \rangle \cr
&= \langle f_{\alpha}\varphi_0 , \varphi_1 \rangle \langle \xi_0 , \xi_1  \rangle \cr
&= \langle \xi_0 , \xi_1  \rangle \int_{\Omega} f_{\alpha}(\omega) \varphi_0 (\omega) \varphi_1 (\omega)d\omega .
\end{align*}
Now since $\varphi_0 \varphi_1 \in L^1 (\Omega)$ 
\begin{equation*}
\int_{\Omega} f_{\alpha}(\omega) \varphi_0 (\omega) \varphi_1 (\omega)d\omega  \rightarrow \int_{\Omega} f(\omega) \varphi_0 (\omega) \varphi_1 (\omega)d\omega .
\end{equation*}
Thus
\begin{equation*}
\langle M_{f_{\alpha}} (\varphi_0 \otimes \xi_0 ) , \varphi_1 \otimes \xi_1 \rangle \rightarrow \langle M_{f} (\varphi_0 \otimes \xi_0 ) , \varphi_1 \otimes \xi_1 \rangle .
\end{equation*}
Now define $ F : L^{\infty}(\Omega) \longrightarrow \mathbb{C}$ by
\begin{equation*}
F(f) = \omega_{\xi , \eta}( x M_f - M_f x).
\end{equation*}
By weak operator continuity of $\omega_{\xi , \eta}$ it follows that $F$ is weak-* continuous and thus $F \in L^1 (\Omega)$. Since $x \in \mathcal{D}^{'}$, we have $F\mid_{\mathcal{D}} =0 $ and hence $F=0$ which is a contradiction to \eqref{T}. This proves (2). 

Now let (2) holds. Assume that $\mathcal{A}$ is a countable subset of $L^2(\Omega ,  \mathcal{H})$ which generates $W$. By Proposition \ref{p1.1} 
\begin{equation*}
 J(\omega) = \overline{span} \lbrace \phi (\omega) : \phi \in \mathcal{A}\rbrace \ \  a.e. \  \omega \in \Omega .
\end{equation*}
We define the operator $S(\omega) $ on the set $\lbrace \phi (\omega) :  \phi \in \mathcal{A}\rbrace$  by
\begin{equation} \label{r.o}
S(\omega) (\phi (\omega)) = U \phi (\omega).
\end{equation}
Then $S(\omega)$ is clearly linear. Also  for any $g \in L^{\infty}(\Omega)$ we have
\begin{align*}
\int_{\Omega} \vert g(\omega) \vert ^2 \Vert U \phi(\omega) \Vert ^2 d\omega &= \Vert g U \phi  \Vert_2 ^ 2 \\
&= \Vert  U g\phi  \Vert_2 ^ 2 \\
& \leq \Vert U \Vert ^2 \Vert  g\phi  \Vert _2 ^ 2 \\
&=  \Vert U \Vert ^2 \int_{\Omega} \vert g(\omega) \vert ^2 \Vert  \phi(\omega) \Vert ^2 d\omega. 
\end{align*}
Since $g \in L^{\infty}(\Omega)$ is arbitrary, this implies that 
\begin{equation} \label{ufi}
\Vert U \phi(\omega) \Vert \leq \Vert U \Vert \Vert \phi(\omega) \Vert, a.e.\  \omega \in \Omega,
\end{equation}
which shows that $S(\omega)$ is bounded and hence it is extended to a bounded linear operator $R(\omega)$ on $  \overline{span} \lbrace \phi (\omega) : \phi \in \mathcal{A} \rbrace = J(\omega) $, as is desired in (3). For the moreover part, \eqref{ufi} clearly implies that $ess\sup_{\omega \in \Omega} \Vert R(\omega) \Vert \leq \Vert U \Vert$. Also we have
\begin{align*}
\Vert U \phi \Vert _2 ^2  &= \int _{\Omega} \Vert U \phi (\omega) \Vert  ^2 d\omega \\
&=  \int _{\Omega} \Vert R(\omega) \phi (\omega) \Vert  ^2 d\omega \\
& \leq \int _{\Omega} \Vert R(\omega) \Vert ^2 \Vert \phi (\omega) \Vert  ^2 d\omega \\
& \leq ess\sup_{\omega \in \Omega} \Vert R(\omega) \Vert ^2   \int _{\Omega}  \Vert \phi (\omega) \Vert  ^2 d\omega \\
&= ess\sup_{\omega \in \Omega} \Vert R(\omega) \Vert ^2 \Vert \phi \Vert _2^2 .
\end{align*}
Thus  $\Vert U \Vert = ess\sup_{\omega \in \Omega} \Vert R(\omega) \Vert $. Finally, by \eqref{r.op} the correspondence between $U$ and $R$ is one to one and onto.
\end{proof}
Let us return to the LCA group setting. For a given LCA group $G$ with a closed subgroup $\Gamma$, let $\Omega$ be a Borel section for $\widehat{G} / \Gamma ^{*}$ and $C$ be a Borel section for $G / \Gamma$.  Now suppose that $V\subseteq L^2(G)$ is a $\Gamma$- translation invariant subspace and $U: V \longrightarrow  L^2(G)$ is a $\Gamma$- translation preserving operator. We define an induced functorial operator on the multiplicatively invariant space $Z(V)$ as
\begin{equation} \label{io}
U': Z(V) \longrightarrow  L^2(\Omega ,  L^2(C))  , \ U'(Zf)= Z(Uf),
\end{equation}
where $Z$ is defined as in Section 1, preceeding Proposition \ref{p1.2}. The following diagram explains \eqref{io}.

\[ 
\begin{psmatrix}
V & L^2(G) &\\
Z(V) & L^{2}(\Omega, L^2(C))
\psset{arrows=->, nodesep=3pt}
\ncline{1,1}{2,1}\trput{Z}
\ncline{1,2}{2,2}\trput{Z}
\ncline{1,1}{1,2}\taput{U}
\ncline{2,1}{2,2}\taput{U'}
\psset{arrows=-, nodesep=3pt}
\ncline[offset=3pt]{1,3}{2,3}
\ncline[offset=-3pt]{1,3}{2,3}
\end{psmatrix}
\]
\\
 In the sequal (Theorem \ref{t3.6}), we apply our characterization theorem (Theorem \ref{t3.3}) to the operator $U'$ so that we can  characterize the translation preserving operator $U$. First, we show that the operator $U'$ defined as \eqref{io} is multiplication preserving.
\begin{proposition} \label{p3.4}
Let $U: V \longrightarrow  L^2(G)$ be a $\Gamma$- translation preserving operator and $U'$ be the induced operator on $Z(V)$ as in \eqref{io}. Then for $f \in V$,
\begin{equation*}
U'(X_{\gamma}  Zf)(\omega)=X_{\gamma}(\omega)U'(Zf)(\omega),
\end{equation*} 
where $X_{\gamma}$ is the corresponding character on $\widehat{G}$, for $\gamma \in \Gamma$.
\end{proposition}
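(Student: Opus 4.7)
The plan is to chain together three facts: the intertwining identity $Z(T_\gamma \phi) = X_\gamma|_\Omega\, Z(\phi)$ from \eqref{Zak}, the hypothesis that $U$ commutes with $T_\gamma$, and the very definition $U'(Zf) = Z(Uf)$ of the induced operator in \eqref{io}. Nothing deeper is required; the proposition is essentially a diagram chase.

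First I would rewrite the left-hand side $U'(X_\gamma\, Zf)$ by using \eqref{Zak} backwards to express $X_\gamma|_\Omega Z(f) = Z(T_\gamma f)$. Since $V$ is $\Gamma$-translation invariant, $T_\gamma f$ lies in $V$, so $Z(T_\gamma f)$ lies in $Z(V)$ and $U'$ can legitimately be applied to it. Applying the definition \eqref{io} gives $U'(Z(T_\gamma f)) = Z(U T_\gamma f)$.

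Next I would use that $U$ is translation preserving, so $U T_\gamma f = T_\gamma U f$, yielding $Z(U T_\gamma f) = Z(T_\gamma U f)$. A second application of \eqref{Zak} converts this to $X_\gamma|_\Omega\, Z(Uf)$, and by definition $Z(Uf) = U'(Zf)$. Evaluating at $\omega \in \Omega$ then produces the claimed identity $U'(X_\gamma\, Zf)(\omega) = X_\gamma(\omega)\, U'(Zf)(\omega)$.

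There is no real obstacle here; the only point that warrants a line of justification is that each intermediate function actually lies in the appropriate space so that $U$, $U'$, and $Z$ can all be applied. This follows from the $\Gamma$-translation invariance of $V$ together with Proposition \ref{p1.2}, which ensures that $Z(V)$ is multiplicatively invariant under $\mathcal{D} = \{X_\gamma|_\Omega : \gamma \in \Gamma\}$, so $X_\gamma\, Zf \in Z(V)$ for every $f \in V$.
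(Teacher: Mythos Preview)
Your argument is correct and follows exactly the same five-step chain as the paper's proof: rewrite $X_\gamma Zf$ as $Z(T_\gamma f)$ via \eqref{Zak}, apply the definition of $U'$, use $UT_\gamma = T_\gamma U$, apply \eqref{Zak} again, and translate back with the definition of $U'$. Your added remark about why each intermediate element lies in the right domain is a welcome bit of extra care that the paper leaves implicit.
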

\begin{proof}
Using the assumption and the property of $Z$ as in \eqref{Zak}, we have
\begin{align*}
  U'(X_{\gamma}  Zf)(\omega) &= U'(Z T_{\gamma}f)(\omega) \cr
  &= Z(U T_{\gamma}f)(\omega) \cr
  &= Z(T_{\gamma} Uf)(\omega) \cr
  &= X_{\gamma}(\omega) Z(Uf)(\omega) \cr
  &= X_{\gamma}(\omega)U'(Zf)(\omega).
 \end {align*}
\end{proof}
By Theorem \ref{t3.3} and Proposition \ref{p3.4} we obtain the following corollary.
\begin{corollary} \label{cor3.5}
Assume that $V$ is a  $\Gamma$- translation invariant space and $U: V \longrightarrow  L^2(G)$ is a $\Gamma$- translation preserving operator. Then the induced operator defined by \eqref{io}  is an $L^{\infty}(\Omega) $- multiplication preserving operator.
\end{corollary}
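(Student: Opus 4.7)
The plan is to verify the hypotheses of Theorem \ref{t3.3} for the induced operator $U'$ on $Z(V)$ and then invoke the equivalence $(1) \Leftrightarrow (2)$ from that theorem. In brief, Proposition \ref{p3.4} already establishes that $U'$ is multiplication preserving with respect to the determining set $\mathcal{D} = \{X_\gamma|_\Omega : \gamma \in \Gamma\}$, and Theorem \ref{t3.3} then promotes this to multiplication preservation with respect to all of $L^\infty(\Omega)$. So the corollary is essentially a packaging of these two inputs.

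The verification of the hypotheses proceeds in three steps. First, since $V$ is $\Gamma$-translation invariant, Proposition \ref{p1.2} guarantees that $Z(V)$ is a multiplicatively invariant subspace of $L^2(\Omega, L^2(C))$ with respect to $\mathcal{D}$, placing us in the setting of Theorem \ref{t3.3}. Second, the operator $U'$ is bounded on $Z(V)$: since $Z$ is an isometric isomorphism and $U$ is bounded on $V$, the identity $U'(Zf) = Z(Uf)$ gives $\|U'(Zf)\|_2 = \|Uf\|_2 \le \|U\|\,\|f\|_2 = \|U\|\,\|Zf\|_2$. Third, Proposition \ref{p3.4} supplies the identity $U'(g\phi)(\omega) = g(\omega) U'(\phi)(\omega)$ for every $g \in \mathcal{D}$ and $\phi \in Z(V)$, which is precisely condition (1) of Theorem \ref{t3.3} for $U'$.

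With these three facts in hand, the equivalence $(1) \Leftrightarrow (2)$ in Theorem \ref{t3.3} immediately yields that $U'$ is multiplication preserving with respect to $L^\infty(\Omega)$. There is essentially no obstacle beyond careful bookkeeping: all of the analytic content has been absorbed into Theorem \ref{t3.3} (whose proof uses von Neumann algebra duality to pass from the determining set $\mathcal{D}$ to $L^\infty(\Omega)$) and into Proposition \ref{p3.4} (which converts the translation-preserving property of $U$ into the $\mathcal{D}$-multiplication-preserving property of $U'$). The only point worth noting is that, before writing $U'(g\phi)$ for $g \in L^\infty(\Omega)$, one needs $g\phi \in Z(V)$; but this is exactly the $L^\infty(\Omega)$-multiplicative invariance of $Z(V)$, which is the equivalence $(2) \Leftrightarrow (3)$ of Proposition \ref{p1.1} applied to $Z(V)$.
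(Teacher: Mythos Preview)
Your proof is correct and follows exactly the approach the paper intends: the corollary is stated as an immediate consequence of Theorem \ref{t3.3} and Proposition \ref{p3.4}, and you have simply spelled out the verification of hypotheses (boundedness of $U'$, multiplicative invariance of $Z(V)$ via Proposition \ref{p1.2}) that the paper leaves implicit. There is nothing to add.
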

In \cite{KRsh}, the authors characterized shift preserving operators, operators that are translation preserving with respect to a closed subgroup which is also cocompact and discrete. In the forthcoming theorem, which is the main result of this paper, we characterize translation preserving operators in the case when $\Gamma$ is not necessarily cocompact or discrete.
\begin{theorem} \label{t3.6}
Let $ V \subseteq L^{2}(G)$ be a $\Gamma$- translation invatiant subspace with range function $J$ and $U: V \longrightarrow L^2(G)$ be a bounded linear operator. Then the following are equivalent. \\
(1) The operator $U$ is translation preserving with respect to $\Gamma$. \\
(2) The induced operator $U'$ is a multiplication preserving operator with respect to $L^{\infty}(\Omega) $.\\
(3) There exists a measurable range operator $R$ on $J$ such that for all $\phi \in V$,
\begin{equation} \label{r.opg}
Z U\phi(\omega) = R(\omega)(Z\phi(\omega)) \ \ a.e. \ \omega \in \Omega .
\end{equation} 
Conversely, given a measurable range operator $R$ on $J$ with $ess \sup_{\omega \in \Omega} \Vert R(\omega) \Vert < \infty$,  there is a bounded translation preserving operator $ U : V \longrightarrow L^2(G)$, such that \eqref{r.opg} holds. The correspondence between $U$ and $R$ is one to one under the usual convention that the range operators are identified if they are equal a.e. Moreover $\Vert U \Vert = ess\sup_{\omega \in \Omega} \Vert R(\omega) \Vert $.
\end{theorem}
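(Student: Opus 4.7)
The plan is to transfer the problem from $L^2(G)$ to the multiplicatively invariant setting of Theorem~\ref{t3.3} via the isometric isomorphism $Z$, and then apply that theorem directly to the induced operator $U'$. Concretely, $Z(V) \subseteq L^2(\Omega, L^2(C))$ is multiplicatively invariant with associated range function $J$ by Proposition~\ref{p1.2}, and $U' = Z U Z^{-1}$ restricted to $Z(V)$, as defined in \eqref{io}, is a bounded linear operator into $L^2(\Omega, L^2(C))$. Under this identification, condition (2) of the present theorem is exactly condition (2) of Theorem~\ref{t3.3} applied to $U'$, and \eqref{r.opg} is the pullback of its condition (3) under $Z$.

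For the implication $(1) \Rightarrow (2)$, I would simply invoke Corollary~\ref{cor3.5}. For the reverse $(2) \Rightarrow (1)$, I would use the intertwining relation \eqref{Zak}. If $U'$ is multiplication preserving with respect to $L^\infty(\Omega)$, then in particular $U'(X_\gamma|_\Omega \, \psi) = X_\gamma|_\Omega \, U'\psi$ for every $\gamma \in \Gamma$ and $\psi \in Z(V)$; applied to $\psi = Z\phi$ with $\phi \in V$ this gives
\begin{equation*}
Z(U T_\gamma \phi) = U'(Z T_\gamma \phi) = U'(X_\gamma|_\Omega \, Z\phi) = X_\gamma|_\Omega \, U'(Z\phi) = X_\gamma|_\Omega \, Z(U\phi) = Z(T_\gamma U \phi),
\end{equation*}
so injectivity of $Z$ yields $U T_\gamma = T_\gamma U$. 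The equivalence $(2) \Leftrightarrow (3)$ is then exactly Theorem~\ref{t3.3} applied to $U'$, together with the identity $U'(Z\phi)(\omega) = Z(U\phi)(\omega)$, which converts the formula $U'\psi(\omega) = R(\omega)\psi(\omega)$ into \eqref{r.opg}.

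For the converse portion, given a measurable range operator $R$ on $J$ with $\operatorname{ess\,sup}_\omega \|R(\omega)\| < \infty$, Theorem~\ref{t3.3} produces a unique bounded $L^\infty(\Omega)$-multiplication preserving operator $\widetilde U : Z(V) \to L^2(\Omega, L^2(C))$ realizing $R$, with $\|\widetilde U\| = \operatorname{ess\,sup}_\omega \|R(\omega)\|$. Setting $U := Z^{-1} \widetilde U Z$ on $V$ and running the chain of equalities above in reverse shows $U T_\gamma = T_\gamma U$, while $\|U\| = \|\widetilde U\|$ follows from $Z$ being an isometry; the one-to-one correspondence between $U$ and $R$ descends from the corresponding statement in Theorem~\ref{t3.3}. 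I do not expect a genuine obstacle here: all of the substantive work, namely the von Neumann algebra argument upgrading $\mathcal{D}$-multiplication preservation to $L^\infty(\Omega)$-multiplication preservation, the fibrewise construction of $R$, and the norm identity, is already encoded in Theorem~\ref{t3.3}, and the intertwining \eqref{Zak} makes the transfer between $U$ and $U'$ automatic, so the present result amounts to a bookkeeping corollary.
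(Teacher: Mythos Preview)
Your proposal is correct and follows essentially the same approach as the paper: both reduce the problem to Theorem~\ref{t3.3} via $Z$, invoke Corollary~\ref{cor3.5} for $(1)\Rightarrow(2)$, and use the intertwining relation \eqref{Zak} together with injectivity of $Z$ for the return to $(1)$. The only cosmetic difference is that the paper proves the cycle $(1)\Rightarrow(2)\Rightarrow(3)\Rightarrow(1)$ (carrying out the intertwining computation with $R(\omega)$ in hand), whereas you establish $(1)\Leftrightarrow(2)$ and $(2)\Leftrightarrow(3)$ separately; the content is the same.
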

\begin{proof}
The implication $(1) \Rightarrow (2)$ is obvious by Corollary \ref{cor3.5}. If (2) holds, then by Theorem \ref{t3.3} there exists a measurable range operator $R$ such that for all $\phi \in W$, 
\begin{equation*}
U' Z \phi(\omega) = R(\omega)(Z\phi(\omega)) \ \ a.e. \ \omega \in \Omega ,
\end{equation*} 
hence 
\begin{equation*}
Z U\phi(\omega) = R(\omega)(Z\phi(\omega)) \ \ a.e. \ \omega \in \Omega ,
\end{equation*}
as desired in (3). Suppose that (3) holds. We have for $\gamma \in \Gamma$ and $\phi \in V$,
\begin{align*}
Z(U T_{\gamma} \phi)(\omega) &= R(\omega)(Z(T_{\gamma} \phi)(\omega)) \cr
&= R(\omega)(X_{\gamma}(\omega) Z\phi(\omega)) \cr
&= X_{\gamma}(\omega) R(\omega)(Z\phi(\omega)) \cr
&= X_{\gamma}(\omega) Z U(\phi)(\omega) \cr
&= Z( T_{\gamma} U \phi)(\omega).
\end {align*}
Now (1) follows from the fact that $Z$ is one to one. Theorem \ref{t3.3} implies that the correspondence between $R$ and $U$ is unique. The moreover part follows from Theorem \ref{t3.3} and the fact that $\Vert U \Vert = \Vert U' \Vert$.
\end{proof}

\section{Some consequences of the characterization Theorem}
In this section we establish relations between some properties of  translation preserving operators and corresponding range operators. We show that compactness of a translation preserving operator implies compactness of the associated range operator. Furthermore, using equivalent definitions of trace and Hilbert Schmidt norms, we obtain a necessary condition for a compact translation preserving operator to be Hilbert Schmidt or of finite trace. Some required  preliminaries
related to operators are given in the sequel. For more details, we refer to usual text books on operator theory, e.g.\cite{M, Zhu}. Recall that an operator $T$ on a Hilbert space $\mathcal{H}$ is called compact, if $T(B)$ is relatively compact, where $B$ is the closed unit ball in $\mathcal{H}$.
% Every compact operator $T$  has a canonical decomposition as follows
%\begin{equation} \label{co}
%T = \sum _{n} \lambda_{n} \langle . , e_{n} \rangle \sigma _{n} ,
%\end{equation}
%for some orthonormal sets $\{e_{n}\} $ and $\{\sigma_{n}\}$, where $\lambda_{n}$ is the sequence of singular values of $T$. In the case when $T$ is self adjoint and compact it can be shown that the decomposition of $T$ is as follows
%\begin{equation} \label{cop}
%T = \sum _{n} \lambda_{n} \langle . , e_{n} \rangle e _{n} ,
%\end{equation}
%for some orthonormal set $\{e_{n}\} $.
 Note that every bounded linear operator with finite rank is compact.

The following proposition states that  compactness of a translation preserving operator implies compactness of the corresponding range operator. The proof is similar to \cite[Theorem 3.1]{KRsh} in the case of $\Gamma$ discrete and cocompact, so is omitted. We remark that in \cite[Theorem 3.1]{KRsh} the additional assumptions, that $\Gamma$ is discrete and cocompact, were not used in the proof.
\begin{proposition}
Let $ V \subseteq L^{2}(G)$ be a $\Gamma$-translation invariant space with associated range function $J$. Suppose that $U: V \longrightarrow L^2(G)$ is a $\Gamma$- translation preserving operator with range operator $R$. If $U$ is compct, then so is $R(\omega)$ for almost all $\omega \in \Omega$.
\end{proposition}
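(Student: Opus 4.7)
The plan is to transfer the problem through the isometric isomorphism $Z$, working instead with the induced multiplication preserving operator $U' = Z U Z^{-1}$ on $Z(V) \subseteq L^2(\Omega, L^2(C))$. Since $Z$ is unitary, $U$ is compact if and only if $U'$ is compact, and by Theorem \ref{t3.6} the range operator $R$ realises $U'$ via $U'\phi(\omega) = R(\omega)\phi(\omega)$. Thus it suffices to prove: if $U'$ is compact, then $R(\omega)$ is compact for a.e.\ $\omega \in \Omega$.

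I would proceed by contradiction. Suppose $E_0 = \{\omega \in \Omega : R(\omega) \text{ is not compact}\}$ has positive measure. By $\sigma$-finiteness we may assume $m(E_0) < \infty$. Non-compactness of $R(\omega)$ means that there is an orthonormal sequence in $J(\omega)$ on which $\|R(\omega) \cdot\|$ does not tend to zero; writing $E_0$ as the countable union of sets $\{\omega \in E_0 : \text{such a sequence can be found with norms} \geq 1/n\}$, one of these pieces has positive measure, so after shrinking we obtain $E \subseteq E_0$ of positive finite measure and a uniform constant $\delta > 0$.

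The key step, and the main technical obstacle, is to realise this choice \emph{measurably} in $\omega$: to construct a sequence $\{\phi_k\} \subset Z(V)$ with $\phi_k(\omega) \in J(\omega)$, $\|\phi_k(\omega)\|_{L^2(C)} \leq \chi_E(\omega)$, with $\{\phi_k(\omega)\}_k$ orthonormal in $L^2(C)$ for every $\omega \in E$, and $\|R(\omega)\phi_k(\omega)\|_{L^2(C)} \geq \delta$ for $\omega \in E$. Starting from a countable generating family $\{Z\psi_n\}$ for $J(\omega)$ supplied by Proposition \ref{t2.3}, I would apply a measurable Gram--Schmidt procedure together with a measurable selection that, at each $\omega \in E$, picks directions witnessing the failure of compactness of $R(\omega)$. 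This is exactly the construction carried out in \cite[Theorem 3.1]{KRsh}, and the authors' remark is that their argument never actually uses discreteness or cocompactness of $\Gamma$.

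Once such a sequence is in hand, the contradiction is immediate. On the one hand, $\{\phi_k\}$ is bounded in $L^2(\Omega, L^2(C))$ with $\|\phi_k\|_2^2 \leq m(E)$ and it converges weakly to $0$: for any $\psi \in L^2(\Omega, L^2(C))$, Bessel's inequality in each fibre gives $\langle \phi_k(\omega), \psi(\omega)\rangle \to 0$ pointwise on $E$, while dominated convergence with majorant $\chi_E(\omega)\|\psi(\omega)\|_{L^2(C)}$ (integrable by Cauchy--Schwarz since $m(E) < \infty$) yields $\langle \phi_k, \psi\rangle \to 0$. Since compact operators send weakly convergent sequences to norm convergent ones, $\|U'\phi_k\|_2 \to 0$. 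On the other hand,
\begin{equation*}
\|U' \phi_k\|_2^2 \;=\; \int_\Omega \|R(\omega)\phi_k(\omega)\|_{L^2(C)}^{\,2}\, dm(\omega) \;\geq\; \delta^2\, m(E) \;>\; 0,
\end{equation*}
contradicting the previous convergence. Hence $R(\omega)$ is compact for almost every $\omega \in \Omega$, as claimed.
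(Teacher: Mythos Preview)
Your proposal is correct and follows exactly the strategy the paper points to. The paper omits the proof entirely, deferring to \cite[Theorem 3.1]{KRsh} with the remark that discreteness and cocompactness of $\Gamma$ are not used there; your sketch---transfer via $Z$, measurable selection of fibrewise orthonormal witnesses to non-compactness of $R(\omega)$, and then a contradiction with compactness of $U'$ via a weakly null sequence whose image is bounded below in norm---is precisely that argument.
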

%\begin{proof}
%First reduce the case to self adjoint operators. If $U$ is compact translation preserving operator, then so is $\vert U\vert$. Indeed one can achieve $U= \vert U \vert S$ for some partial isometry translation preserving operator S( see \cite{Mur}). Accordingly, $R(\omega) = \vert R(\omega) \vert W(\omega)$, with a partial isometry $W(\omega)$,  $\omega \in \Omega$. If we know that $\vert R(\omega) \vert$ is compact, then the same follows for $R(\omega)$. For a  positive compact translation preserving operator $U$ one has $U=\Sigma_{n} \lambda _{n}P_{n}$ with finite dimensional projections and suitable $\lambda_{n}$ \cite{Scha}. Each $P_{n}$ is again translation preserving, thus by Theorem \ref{t3.6}, corresponding to a field $R_{n}(\omega)$ with rank less than or equal to rank($P_{n}$), for a.e $\omega \in \Omega$. Therefore the decomposition of $U$ provides $R(\omega)=\Sigma_{n} \lambda _{n}R_{n}(\omega)$. Moreover, as the decomposition of $U$ converges in the operator norm, the same is true for decompotion of $R(\omega)$, for a.e $\omega \in \Omega$. For these $\omega$, $R(\omega)$ is the norm limit of a sequence of finite rank operators, thus compact.
%\end{proof}

Let $U$ be an operator on a Hilbert space $\mathcal{H}$, and suppose that $E$ is an orthonormal basis for $\mathcal{H}$. The Hilbert-Schmidt norm of $U$, denoted by $\Vert U \Vert _{HS}$, is defined as 
\begin{equation*}
\Vert U \Vert _{HS} = \left( \sum_{x\in E} \Vert Ux \Vert^2 \right) ^{\frac{1}{2}}.
\end{equation*}
 The operator  $U$ is called of finite trace if $tr(U) < \infty$, where
 \begin{equation*}
tr(U) = \sum_{x \in E} \langle Ux,x\rangle
\end{equation*}
  is the trace of $U$. Note that the definitions are independent of the choice of orthonormal basis. The following lemma enables us to employ Parseval frames instead of  orthonormal bases  for defining the Hilbert Schmidt norm and trace of a bounded linear operator.
 \begin{lemma} \label{l3.8}
 If $U$ is an operator on a  Hilbert space $\mathcal{H}$  and $F $ is a Parseval frame for $\mathcal{H}$, then $\Vert U \Vert _{HS} = \left( \sum_{y \in F} \Vert Uy \Vert^2 \right) ^{\frac{1}{2}}$. In particular if $U$ is positive,  $tr(U) = \sum_{y \in F} \langle Uy ,y\rangle$.
 \end{lemma}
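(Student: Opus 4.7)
The plan is to reduce both claims to Parseval's identity applied in two different ways: first to recast $\|Uy\|^2$ in terms of an orthonormal basis, and then to recognize an inner sum as the Parseval-frame expansion of $\|U^\ast x\|^2$. Concretely, fix an orthonormal basis $E$ of $\mathcal{H}$. For each $y\in F$, Parseval's identity for $E$ gives $\|Uy\|^2=\sum_{x\in E}|\langle Uy,x\rangle|^2=\sum_{x\in E}|\langle y,U^\ast x\rangle|^2$. I would then interchange the two summations (this is legal by Tonelli, since all the terms are non-negative, the sums possibly being $+\infty$):
\begin{equation*}
\sum_{y\in F}\|Uy\|^2=\sum_{x\in E}\sum_{y\in F}|\langle y,U^\ast x\rangle|^2.
\end{equation*}

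Now the Parseval frame property of $F$ applied to the vector $U^\ast x\in\mathcal H$ says exactly that the inner sum equals $\|U^\ast x\|^2$. Summing over $x\in E$ gives $\sum_{x\in E}\|U^\ast x\|^2=\|U^\ast\|_{HS}^2$, which equals $\|U\|_{HS}^2$ by the standard identity $\|U\|_{HS}=\|U^\ast\|_{HS}$ (itself a consequence of Parseval, applied to any two orthonormal bases). This establishes the first assertion.

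For the trace statement, assume $U$ is positive. Then its positive square root $U^{1/2}$ exists and satisfies $(U^{1/2})^\ast U^{1/2}=U$, so $\langle Uy,y\rangle=\|U^{1/2}y\|^2\ge 0$ for every $y$. Applying the first part of the lemma to the operator $U^{1/2}$ in place of $U$ yields
\begin{equation*}
\sum_{y\in F}\langle Uy,y\rangle=\sum_{y\in F}\|U^{1/2}y\|^2=\|U^{1/2}\|_{HS}^2.
\end{equation*}
On the other hand, expanding $\|U^{1/2}\|_{HS}^2$ along the orthonormal basis $E$ gives $\sum_{x\in E}\|U^{1/2}x\|^2=\sum_{x\in E}\langle Ux,x\rangle=tr(U)$, which is the desired equality.

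The only subtleties I expect are bookkeeping: justifying the Fubini/Tonelli swap (handled by positivity of all summands and allowing $+\infty$ as a value, so that the identity holds meaningfully whether or not $U$ is Hilbert--Schmidt), and invoking the existence of $U^{1/2}$ for positive $U$ together with the basis-independence of $\|\cdot\|_{HS}$ and $tr$. Both are standard facts from the operator-theory references (\cite{M, Zhu}) cited just above the lemma, so no real obstacle arises.
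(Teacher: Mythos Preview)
Your argument is correct and matches the paper's proof essentially line for line: expand $\|Uy\|^2$ via an orthonormal basis, swap the sums, apply the Parseval frame identity to obtain $\|U^*\|_{HS}^2=\|U\|_{HS}^2$, and then handle the trace by passing to $U^{1/2}$. The only cosmetic difference is that for the trace you invoke the first part applied to $U^{1/2}$, while the paper unwinds the same double-sum computation again; your version is slightly cleaner but not a genuinely different route.
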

 \begin{proof}
 Let $E$ be an orthonormal basis for $\mathcal{H}$. We have
 \begin{align*}
 \sum_{y \in F} \Vert Uy \Vert^2 &= \sum_{y \in F} \sum_{x \in E} \vert \langle Uy , x \rangle \vert ^2 \cr
&= \sum_{x \in E} \sum_{y \in F} \vert \langle y , U^* x \rangle \vert ^2 \cr
&= \sum_{n} \Vert U^* x \Vert ^2  \cr
&= \Vert U^* \Vert _{HS} ^2 = \Vert U \Vert _{HS}^2 .
 \end{align*}
Moreover, if $U$ is positive
 \begin{align*}
tr(U) &= \sum_{x \in E} \langle Ux ,x\rangle \cr
&=\sum_{x \in E} \langle U^{\frac{1}{2}} x ,U^{\frac{1}{2}} x\rangle \cr
&=  \sum_{x \in E} \sum_{y \in F} \vert \langle U^{\frac{1}{2}} x, y \rangle \vert ^2 \cr
&=  \sum_{y \in F} \sum_{x \in E} \vert \langle  x,U^{\frac{1}{2}} y \rangle \vert ^2 \cr
&= \sum_{y \in F} \Vert U^{\frac{1}{2}} y \Vert ^2 \cr
&= \sum_{y \in F} \langle Uy ,y\rangle.
 \end{align*}
 \end{proof}

Using Lemma \ref{l3.8}, we have the following proposition which shows that a compact range operator is Hilbert Schmidt or of finite trace whenever the associated translation preserving operator has the same properties. It generalizes \cite[Theorem 3.3]{KRsh}.
\begin{proposition}
Suppose that $ V \subseteq L^{2}(G)$ is a $\Gamma$- translation invariant space with associated range function $J$. Let $U: V \longrightarrow V$ be a compact  $\Gamma$- translation preserving operator with range operator $R$.\\
(1) If $U$ is a Hilbert Schmidt operator then so is $R(\omega)$ for a.e. $\omega \in \Omega$. \\
(2) If $U$ is positive and of finite trace then so is $R(\omega)$ for a.e. $\omega \in \Omega$.
% If  $ 1 \leq p < \infty$ and $U$ is in Schatten $p$- class $S_{p}$, then so is  $R(\omega)$ for a.e $\omega \in \Omega $. 
%In particular when $p=1$ and $U$ is positive,
%\begin{equation*}
%\Vert U \Vert _{1} = \int _{\Omega} \Vert R(\omega) \Vert _{1} \  d\omega .
%\end{equation*}

\end{proposition}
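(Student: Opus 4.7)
The plan is to exploit the principal decomposition of $V$ from Proposition \ref{t2.3} and localize in $\Omega$ to promote Bessel's inequality into a pointwise estimate on $R(\omega)$.

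First I would decompose $V=\bigoplus_n S^{\Gamma}(\phi_n)$ where $\{T_\gamma\phi_n:\gamma\in\Gamma\}$ is a continuous Parseval frame of $S^{\Gamma}(\phi_n)$. Translating the continuous Parseval condition through $Z$ and the Plancherel identity between $L^2(\Omega)$ and $L^2(\Gamma)$ forces $\|Z\phi_n(\omega)\|\in\{0,1\}$ for a.e.\ $\omega$, while the orthogonality of the summands $S^{\Gamma}(\phi_n)$, combined with multiplicative invariance, transfers to the fiber level as $\langle Z\phi_n(\omega),Z\phi_m(\omega)\rangle=0$ a.e.\ for $n\neq m$. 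Hence the nonzero vectors among $\{Z\phi_n(\omega)\}_n$ form an orthonormal basis of $J(\omega)$ a.e. Applying Lemma \ref{l3.8} fiberwise together with Theorem \ref{t3.6} then gives
\[
\|R(\omega)\|_{HS}^2=\sum_n\|R(\omega)Z\phi_n(\omega)\|^2=\sum_n\|ZU\phi_n(\omega)\|^2\quad\text{a.e.},
\]
and an analogous identity for $tr(R(\omega))$ when $U\ge 0$ (after first verifying that $R(\omega)\ge 0$ a.e., by testing the positivity $\langle U'\phi,\phi\rangle\ge 0$ against $\phi$ of the form $\chi_A\cdot a$ for $A\subseteq\Omega$ and measurable sections $a(\omega)\in J(\omega)$).

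Next I would bound these fiber sums by localization. Setting $E_n=\{\omega:Z\phi_n(\omega)\neq 0\}$ and taking any measurable $A\subseteq\Omega$ with $0<|A|<\infty$, the vectors $v_n^A=\chi_A Z\phi_n/\sqrt{|A\cap E_n|}$ (for $n$ with $|A\cap E_n|>0$) form an orthonormal system in $Z(V)$ by the pointwise orthogonality above. Since $\|U'\|_{HS}=\|U\|_{HS}$, Bessel's inequality combined with $U'\phi(\omega)=R(\omega)\phi(\omega)$ yields
\[
\sum_n\frac{1}{|A\cap E_n|}\int_{A\cap E_n}\|R(\omega)Z\phi_n(\omega)\|^2\,d\omega\le\|U\|_{HS}^2,
\]
and for $U\ge 0$ of finite trace, the analogue with $\langle R(\omega)Z\phi_n(\omega),Z\phi_n(\omega)\rangle$ replacing $\|R(\omega)Z\phi_n(\omega)\|^2$ is bounded by $tr(U)$, since expanding a positive trace operator in any orthonormal system provides a lower bound on its trace.

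Finally, for a.e.\ $\omega_0\in\Omega$, I would choose shrinking neighborhoods $A_k\ni\omega_0$ with $|A_k|\to 0$. By Lebesgue differentiation in the second countable LCA group $\widehat G/\Gamma^*$, each average $|A_k\cap E_n|^{-1}\int_{A_k\cap E_n}\|R(\omega)Z\phi_n(\omega)\|^2\,d\omega$ converges to $\|R(\omega_0)Z\phi_n(\omega_0)\|^2$ whenever $\omega_0\in E_n$ (using that $\omega_0$ is a density point of $E_n$ a.e.), while the remaining terms contribute nonnegatively. Fatou's lemma applied to the sum over $n$ then gives $\|R(\omega_0)\|_{HS}^2\le\|U\|_{HS}^2$ a.e., and the same reasoning bounds $tr(R(\omega_0))$ by $tr(U)$. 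The hardest step will be establishing the pointwise orthonormal basis property of $\{Z\phi_n(\omega)\}_n$ from the continuous Parseval frame property in Proposition \ref{t2.3}; the subsequent localization and differentiation are routine.
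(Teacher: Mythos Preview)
Your argument is correct and even yields the sharper pointwise bound $\|R(\omega)\|_{HS}\le\|U\|_{HS}$ (and $tr(R(\omega))\le tr(U)$) a.e., but it takes a genuinely different and more elaborate route than the paper. The paper also starts from Proposition~\ref{t2.3} and the fiberwise Parseval property of $\{Z\phi_n(\omega)\}$ (quoted from \cite[Theorem~6.6]{BHP}), but then proceeds by a single global integration rather than localization: it applies (a continuous-frame version of) Lemma~\ref{l3.8} to the Parseval system $\{T_\gamma\phi_n\}$ to obtain $\sum_n\int_\Gamma\|U T_\gamma\phi_n\|^2\,dm_\Gamma=\|U\|_{HS}^2<\infty$, infers $\sum_n\|U\phi_n\|^2<\infty$, and then computes directly
\[
\sum_n\|U\phi_n\|^2=\sum_n\int_\Omega\|ZU\phi_n(\omega)\|^2\,d\omega=\int_\Omega\sum_n\|R(\omega)Z\phi_n(\omega)\|^2\,d\omega=\int_\Omega\|R(\omega)\|_{HS}^2\,d\omega,
\]
so $R(\omega)$ is Hilbert--Schmidt a.e.; the trace case is analogous. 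Thus the paper needs only Fubini and the isometry of $Z$, whereas you trade the continuous-frame form of Lemma~\ref{l3.8} for localized orthonormal systems $\{\chi_A Z\phi_n/|A\cap E_n|^{1/2}\}$, the discrete Bessel inequality, and a Lebesgue differentiation argument on $\widehat G/\Gamma^*$. Your approach is more self-contained on the operator-theoretic side and produces a uniform bound, but it leans on Lebesgue differentiation in a general second countable LCA group, which you should support with a reference. Finally, the step you flag as hardest---the fiberwise orthonormal-basis property of $\{Z\phi_n(\omega)\}$---is in fact immediate: the paper simply cites \cite[Theorem~6.6]{BHP} for the fiberwise Parseval frame, and your refinement to an orthonormal basis follows at once from the orthogonality of the summands $S^\Gamma(\phi_n)$ together with the Parseval property of each $\{T_\gamma\phi_n\}$.
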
 
\begin{proof}
First note that with the notation as in Proposition \ref{t2.3}, $\{ T_{\gamma} \phi _{n}  :  \gamma \in \Gamma , \ n \in \mathbb{N} \}$ is a continuous Parseval frame for $V= \bigoplus _{n \in \mathbb{N}} S^{\Gamma}(\phi _{n})$ and hence by \cite[Theorem 6.6]{BHP}, the set $\{ Z(\phi _{n}) (\omega) : \ n \in \mathbb{N} \} $ is a Parseval frame for $J(\omega)$, for almost every $\omega \in \Omega$. Now let $U$ be Hilbert Schmidt. By Lemma \ref{l3.8},
\begin{equation*}
\int_{\Gamma} \sum_{ n \in \mathbb{N}} \Vert T_{\gamma} U \phi _{n} \Vert ^2 dm_{\Gamma}(\gamma) = \sum_{ n \in \mathbb{N}} \int_{\Gamma} \Vert U(T_{\gamma} \phi _{n}) \Vert ^2 dm_{\Gamma}(\gamma)  = \Vert U \Vert _{HS}^2 < \infty .
\end{equation*}
Using the fact that $Z$ is isometry and Theorem \ref{t3.6}, we obtain 
\begin{align*}
\infty &>  \sum_{ n \in \mathbb{N}} \Vert U \phi _{n} \Vert ^2 \cr
&= \sum_{ n \in \mathbb{N}}\Vert Z  U \phi _{n}  \Vert ^2 \cr
&= \sum_{ n \in \mathbb{N}} \int _{\Omega} \Vert Z  U \phi _{n} (\omega) \Vert ^2 d \omega \cr
&= \int _{\Omega} \sum_{ n \in \mathbb{N}}\Vert Z U \phi _{n} (\omega) \Vert ^2 d \omega \cr
&= \int _{\Omega} \sum_{ n \in \mathbb{N}} \Vert R(\omega) (Z(\phi _{n})(\omega)) \Vert ^2 d\omega \cr
&= \int _{\Omega} \Vert R(\omega) \Vert _{HS} ^2 \  d\omega , 
\end{align*}
which shows that $R(\omega)$ is Hilbert Schmidt,  for a.e. $\omega \in \Omega$. 
If $U$ is positive and of finite trace, then by Lemma \ref{l3.8}, the fact that $Z$ is isometry (in the second equality below), and Theorem \ref{t3.6} (in the third equality below),
\begin{align*}
\infty > tr(U)  &= \sum_{ n \in \mathbb{N}} \int_{\Gamma} \langle U T_{\gamma} \phi _{n} , T_{\gamma} \phi _{n} \rangle dm_{\Gamma}(\gamma) \cr
&= \sum_{ n \in \mathbb{N}} \int_{\Gamma} \langle Z U T_{\gamma} \phi _{n} , Z T_{\gamma} \phi _{n} \rangle dm_{\Gamma}(\gamma) \cr
&= \sum_{  n \in \mathbb{N}}  \int_{\Gamma} \int _{\Omega} \langle R(\omega )( Z T_{\gamma} \phi _{n} (\omega)) , Z T_{\gamma} \phi _{n} (\omega)  \rangle d \omega dm_{\Gamma}(\gamma)  \cr
&= \sum_{  n \in \mathbb{N}} \int_{\Gamma} \int _{\Omega} \langle R(\omega )( X _{\gamma} Z  \phi _{n} (\omega)) , X_{\gamma} Z \phi _{n} (\omega)  \rangle d \omega dm_{\Gamma}(\gamma) \cr
&= \int_{\Gamma} \sum_{  n \in \mathbb{N}}  \int _{\Omega} \langle R(\omega )( X _{\gamma} Z  \phi _{n} (\omega)) , X_{\gamma} Z \phi _{n} (\omega)  \rangle d \omega dm_{\Gamma}(\gamma) .
\end{align*}
So we have
\begin{align*}
\infty &> \sum_{ n \in \mathbb{N}} \int _{\Omega} \langle R(\omega )(  Z  \phi _{n} (\omega)) , Z \phi _{n} (\omega)  \rangle d \omega  \cr
&=  \int _{\Omega} \sum_{ n \in \mathbb{N}} \langle R(\omega )(  Z  \phi _{n} (\omega)) , Z \phi _{n} (\omega)  \rangle d\omega.
\end{align*}
Thus by Lemma \ref{l3.8}, $R(\omega)$ is of finite trace, for a.e. $\omega \in \Omega$. 
%Let $U \in S_p$, for $p\geq 1$. Then by \cite[Lemma 1.25]{Zhu}, $ U^p \in S_1 $. By  the Stone-Weierstrass theorem, $U^p$ is a translation preserving operator. Hence the corresponding range operator $R(\omega) ^ p$, is in $S_1$. Again by \cite[Lemma 1.25]{Zhu}, we conclude that $ R(\omega) \in S_p$.
\end{proof} 
The next proposition states that a necessary and sufficient condition for a translation preserving operator to be isometric (self adjoint) is that its corresponding range operator is  isometric (self adjoint). The proof is similar to \cite[Propositions 3.4, 3.5]{KRsh} and is omitted.
\begin{proposition}
Suppose that $ V \subseteq L^{2}(G)$ is a $\Gamma$- translation invariant space with associated range function $J$. Let $U: V \longrightarrow V$ be a compact  $\Gamma$- translation preserving operator with range operator $R$. Then\\
(1) $U$ is isometry if and only if so is $R(\omega)$ for almost every $\omega \in \Omega$. \\
(2) $U$ is self adjoint if and only if so is $R(\omega)$ for almost every $\omega \in \Omega$.
\end{proposition}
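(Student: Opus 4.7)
My plan is to transfer the problem from the translation preserving operator $U$ on $V$ to the range operator $R$ via the identity $ZU\phi(\omega) = R(\omega)(Z\phi(\omega))$ of Theorem \ref{t3.6}, exploiting the fact that $Z$ is an isometric isomorphism to reduce both assertions to pointwise (in $\omega$) statements about $R(\omega)$ on $J(\omega)$. Since $U$ sends $V$ into $V$, Proposition \ref{p1.2} ensures $R(\omega)$ maps $J(\omega)$ into $J(\omega)$ a.e., so that ``isometry'' and ``self-adjoint'' for $R(\omega)$ are meaningful.

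For the easy directions I would just integrate. If $R(\omega)$ is an isometry on $J(\omega)$ a.e., then for $\phi \in V$, since $Z\phi(\omega)\in J(\omega)$ a.e.,
\[
\|U\phi\|^2 = \|ZU\phi\|^2 = \int_{\Omega} \|R(\omega) Z\phi(\omega)\|^2\, d\omega = \int_{\Omega} \|Z\phi(\omega)\|^2\, d\omega = \|\phi\|^2.
\]
The self-adjoint case is analogous: assuming $R(\omega)^*=R(\omega)$ on $J(\omega)$ a.e., the pointwise identity $\langle R(\omega) Z\phi(\omega), Z\psi(\omega)\rangle = \langle Z\phi(\omega), R(\omega) Z\psi(\omega)\rangle$ integrates to $\langle U\phi,\psi\rangle = \langle \phi, U\psi\rangle$ for all $\phi, \psi \in V$.

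For the harder converse directions, the idea is to localize in $\omega$ by multiplying by test functions $g \in L^{\infty}(\Omega)$. Given $\phi,\psi \in V$ and $g \in L^{\infty}(\Omega)$, Proposition \ref{p1.2} yields $\tilde\phi,\tilde\psi\in V$ with $Z\tilde\phi = gZ\phi$ and $Z\tilde\psi = gZ\psi$, and Corollary \ref{cor3.5} gives $ZU\tilde\phi(\omega) = g(\omega) R(\omega) Z\phi(\omega)$. If $U$ is an isometry, computing $\|U\tilde\phi\|^2 = \|\tilde\phi\|^2$ produces
\[
\int_{\Omega} |g(\omega)|^2\, \|R(\omega) Z\phi(\omega)\|^2\, d\omega = \int_{\Omega} |g(\omega)|^2\, \|Z\phi(\omega)\|^2\, d\omega
\]
for every $g \in L^{\infty}(\Omega)$, from which $\|R(\omega) Z\phi(\omega)\| = \|Z\phi(\omega)\|$ a.e. Similarly, if $U$ is self-adjoint, setting $\langle U\tilde\phi,\psi\rangle = \langle \tilde\phi, U\psi\rangle$ gives $\langle R(\omega) Z\phi(\omega), Z\psi(\omega)\rangle = \langle Z\phi(\omega), R(\omega) Z\psi(\omega)\rangle$ a.e. for each fixed pair $\phi,\psi \in V$.

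The main technical obstacle is upgrading ``for each $\phi$ (resp.\ pair $\phi,\psi$), the identity holds a.e.\ in $\omega$'' to ``a.e.\ $\omega$, the identity holds on all of $J(\omega)$.'' For this I would invoke the countable generating set $\mathcal{A} \subseteq V$ of Proposition \ref{p1.2} with $J(\omega) = \overline{span}\{Z\phi(\omega) : \phi \in \mathcal{A}\}$ a.e., replace $\mathcal{A}$ by its (still countable) $\mathbb{Q}+i\mathbb{Q}$-linear span inside $V$, and intersect the resulting countable family of conull sets to obtain a single conull $\Omega_0 \subseteq \Omega$ on which the desired identity holds on a dense subset of $J(\omega)$. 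Boundedness (hence continuity) of $R(\omega)$ then extends it to all of $J(\omega)$. This null-set bookkeeping together with the density step is the only real subtlety; everything else is a routine transcription of standard Hilbert space arguments through the isometric isomorphism $Z$.
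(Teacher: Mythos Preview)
Your argument is correct and is precisely the natural one: transfer via $Z$ to $L^2(\Omega,L^2(C))$, integrate for the easy directions, localize with $g\in L^\infty(\Omega)$ for the converse (exactly as in the proof of Theorem~\ref{t3.3}), and then pass from ``for each $\phi$'' to ``for a.e.\ $\omega$, all of $J(\omega)$'' via a countable generating set and continuity of $R(\omega)$. The paper does not actually give a proof here; it simply declares the argument to be the same as \cite[Propositions~3.4,~3.5]{KRsh} and omits it, so there is nothing substantive to compare against beyond confirming that your approach matches the standard template those results use. One minor remark: the compactness hypothesis on $U$ plays no role in your proof (nor should it---the equivalence holds for arbitrary bounded $\Gamma$-translation preserving $U:V\to V$), so it is presumably a vestigial assumption carried over from the preceding propositions.
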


\begin{example}
Define $U  :   L^2(\Bbb{R}) \longrightarrow  L^2(\Bbb{R}) $ by $Uf(x) = f(x) - f(x-1)$. Clearly $U$ is a translation preserving operator. By Theorem \ref{t3.6} there exists a range operator $R$ such that for every $\phi \in L^2(\Bbb{R})$,
\begin{equation*}
R(\omega) (Z\phi (\omega)) = (Z U) \phi (\omega) = Z\phi (\omega) + \exp (i\omega) Z\phi (\omega) = (1+\exp (i\omega))  Z\phi (\omega) .
\end{equation*}
Note that $R(\omega)$ is a multiplication operator (multiplication by $1+\exp (i\omega)$) which is not compact. Notice that also $U$ is not compact. Moreover, $U^* f (x) = f(x) + f(x-1)$ and $R(\omega)^* (Z\phi (\omega)) =(1- \exp (i\omega))  Z\phi (\omega) $, where $f , \phi \in L^2(\Bbb{R})$.
\end{example}

\begin{example}
For a fixed prime number $p$, the $p$-adic numbers $\mathbb{Q}_p$  is the completion of rational numbers $\mathbb{Q}$ under the $p$-adic norm $\vert  .  \vert _p$ defined as follows. Every nonzero rational $x$ can be uniquely written as  $ x= \frac{r}{s}p^n$, where $r, s, n \in \mathbb{Z}$ and $p$ does not divide $r$ or $s$. We then define the $p$-adic norm of $x$ by $ \vert x \vert _p = p^{-n}$, in addition $\vert 0 \vert _p = 0$. Then $\mathbb{Q}_p$ is an additive LCA group and 
 $\mathbb{I}_p := \lbrace x \in  \mathbb{Q}_p :  \vert x \vert _p  \leq 1  \rbrace$ is a closed, compact, and open subgroup of $\mathbb{Q}_p$. Now consider the operator $U : L^2(\mathbb{Q}_p) \longrightarrow L^2(\mathbb{Q}_p)$, given by $U\phi(x) = \overline{\phi(x)}$. Then $U$ is clearly an $\mathbb{I}_p$-translation preserving operator. By Theorem \ref{t3.6}, there exists a range operator $R$ such that for any $\phi \in L^2(\mathbb{Q}_p)$,
\begin{equation*}
R(\omega) (Z\phi (\omega)) = (Z U) \phi (\omega)=Z \overline{\phi}(\omega) = \overline{Z\phi(- \omega)}.
\end{equation*}
Note that $U$ and $R(\omega)$ are both isometries.
\end{example}

%\section{Appendix}
%In this section we prove the result of the Theorem \ref{t3.6} in an abstract version.

%\begin{proposition}
%\end{proposition}

\textbf{Acknowledgements}
We are indebted to Professor Hartmut Fuhr for valuable comments and remarks on the proof of Theorem 2.2. We thank Professor Kenneth A. Ross for stimulating discussions on an earlier version of this paper. We also thank Dr. R. Fa'al for his usefull suggestions.

\bibliographystyle{amsplain}

\end{document}